  \newcounter{constant}
\def\arraypar#1{\parbox[c]{\textwidth - 2cm}{\centering #1}}
\def\Z{{\mathbb Z}}
\def\R{{\mathbb R}}
\renewcommand*\d{\mathop{}\!\mathrm{d}}
\def\be{\begin{equation}}
\def\ee{\end{equation}}
\def\bea{\begin{equation*}}
\def\eea{\end{equation*}}
\def\bal{\begin{aligned}}
\def\eal{\end{aligned}}
\def\eps{\varepsilon}
\DeclareMathOperator{\cross}{Cross}
\def\Pr{{\mathbb P}}
\newtheorem{theorem}{Theorem}%[section]
\newtheorem{lemma}[theorem]{Lemma}
\newtheorem{definition}[theorem]{Definition}
\theoremstyle{remark}
\newtheorem{remark}{Remark}
\newtheorem{preex}[theorem]{Example}
\theoremstyle{definition}
\begin{document}

\title{Existence of an unbounded vacant set for subcritical continuum percolation}
\date{}
\author{Daniel Ahlberg, Vincent Tassion and Augusto Teixeira}
\maketitle

\begin{abstract}
  We consider the Poisson Boolean percolation model in $\mathbb{R}^2$, where the radii of each ball is independently chosen according to some probability measure with finite second moment.
  For this model, we show that the two thresholds, for the existence of an unbounded occupied and an unbounded vacant component, coincide.
  This complements a recent study of the sharpness of the phase transition in Poisson Boolean percolation by the same authors.
  As a corollary it follows that for Poisson Boolean percolation in $\mathbb{R}^d$, for any $d\ge2$, finite moment of order $d$ is both necessary and sufficient for the existence of a nontrivial phase transition for the vacant set.
\end{abstract}

\section*{Introduction}

Percolation theory is the collective name for the study of long-range connections in models of random media. In the most fundamental model of this kind, Bernoulli bond percolation, a discrete random structure is obtained by removing edges of the $\Z^2$ nearest-neighbour lattice by the toss of a coin. The removed edges can be identified with present edges of a slightly shifted `dual' lattice. A milestone within percolation theory was reached in 1980 with Kesten's proof that the thresholds for the existence of an infinite primal and an infinite dual component coincide~\cite{kesten80}. Analogous results have since been obtained for models of percolation in the continuum of $\R^2$, such as Poisson Boolean percolation with bounded radii~\cite{roy90} and Poisson Voronoi percolation~\cite{bolrio06b}. In our previous work \cite{ahltastei}, we characterized the phase transition of the 2 dimensional Poisson Boolean model in terms of crossing probabilities. As a consequence, we proved that the vacant and occupied phase transitions occur at the same parameter under a specific moment assumption (see Equation~\eqref{eq:15} below). As announced in \cite{ahltastei}, we prove in the present paper that the result holds under the minimally required condition that the radii distribution has finite second moment. As a corollary we show that finite moment of order $d$ is both necessary and sufficient for the existence of a nontrivial phase transition for the vacant set in dimension $d\ge2$. We have learned that an alternative argument has been obtained independently by Penrose~\cite{penrose} (submitted June 7, 2017), see Remark~\ref{rem:1} for more details.

In Poisson Boolean percolation, a random occupied set is created based on a Poisson point process in $\R^d$ with intensity parameter $\lambda\ge0$. At each point we center a disc with radius sampled independently from some probability distribution $\mu$ on $\R_+$. The subset $\mathcal{O}\subseteq\R^d$ of points covered by some disc is referred to as the \emph{occupied} set, and its complement $\mathcal{V}=\R^d\setminus\mathcal{O}$ as the \emph{vacant} set. Clearly, the probability of long-range connections in the occupied set is increasing in $\lambda$. Conversely, connections in the vacant set become less likely as $\lambda$ increases. We introduce the threshold parameters
\begin{align*}
\lambda_c&:=\inf\big\{\lambda\ge0:\Pr_\lambda\big[0\stackrel{\mathcal O}{\longleftrightarrow}\infty\big]>0\big\},\\
\lambda_c^\star&:=\sup\big\{\lambda\ge0:\Pr_\lambda\big[0\stackrel{\mathcal V}{\longleftrightarrow}\infty\big]>0\big\},
\end{align*}
where we write $0\stackrel{\mathcal S}{\longleftrightarrow}\infty$ if the connected component  in $\mathcal S$ of the origin is unbounded.

Since a detailed description of the model and an account of previous work on the topic was given by the same authors in the recent paper~\cite{ahltastei}, we shall only recall what is relevant for the results presented here.

Through the paper, when considering Poisson Boolean percolation in dimension $d$, we always assume that the law $\mu$ of the radius size has moment of order $d$:    
\begin{equation}
  \int_0^\infty r^d \,\mu(\d r)<\infty. \label{eq:1}\tag{$\mathbf H_d$}  
\end{equation}
This assumption is very natural since the space $\mathbb R^d$ is entirely occupied for any $\lambda>0$ if \eqref{eq:1} does not hold,  see~\cite{hall85}. In other words, we have for every $\lambda>0$,
\begin{equation}\label{e:cover_space}
 \eqref{eq:1} \iff \Pr_\lambda\big(\mathcal{O}=\mathbb{R}^d\big)=0.
\end{equation}
Furthermore, as soon as \eqref{eq:1} is satisfied, there exists $\lambda>0$ sufficiently small to ensure that the occupied set does not percolate. More precisely,  it is proved in~\cite{gouere08} that this moment assumption is equivalent to the non triviality of the phase transition for the occupied set: 
\begin{equation}
  \eqref{eq:1} \iff \lambda_c\in(0,\infty).\label{eq:20}
\end{equation}
In this paper, we investigate the phase transition of the vacant set. Our main result states that  in dimension $2$, the phase transitions of the vacant and occupied sets occur at the same parameter, under the minimal assumption $(\mathbf H_2)$.

  \begin{theorem}\label{thm:cor}
 Consider Poisson Boolean percolation in $\R^2$, and assume the finite second moment hypothesis~$(\mathbf H_2)$. 
 Then, we have 
 \begin{equation}
   \lambda_c^\star=\lambda_c.\label{eq:4}
 \end{equation}
\end{theorem}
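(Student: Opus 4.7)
The inequality $\lambda_c^\star\le\lambda_c$ is the standard half of the statement and follows from the planar topological argument adapted to the Poisson Boolean model in~\cite{roy90}: the occupied and vacant sets cannot both contain unbounded components simultaneously, so any $\lambda$ admitting vacant percolation automatically satisfies $\lambda\le\lambda_c$. This argument goes through under $(\mathbf H_2)$, since balls then have a.s.\ finite radius. The substance of the theorem is therefore the reverse inequality $\lambda_c^\star\ge\lambda_c$, and the task is to show that for every $\lambda<\lambda_c$ the vacant set contains an unbounded component almost surely.

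My plan is a truncation of the radius distribution. Split the driving Poisson process into two independent pieces according to whether the radius is at most $R$ or bigger, and write $\mathcal O = \mathcal O^{\le R}\cup \mathcal O^{>R}$ and $\mathcal V = \mathcal V^{\le R}\cap \mathcal V^{>R}$ correspondingly. Since $\mathcal O^{\le R}\subseteq\mathcal O$, the truncated occupied set is subcritical at $\lambda$; moreover, its radius distribution being compactly supported, it trivially satisfies the stronger moment assumption~\eqref{eq:15} used in~\cite{ahltastei}. Applying that paper's result to the truncated model yields an unbounded component of $\mathcal V^{\le R}$, and in fact the RSW/circuit machinery of~\cite{ahltastei} should deliver in its quantitative form a positive lower bound, uniform in $k$, on the probability that $\mathcal V^{\le R}$ contains a circuit around the origin inside each dyadic annulus $\annulus(2^k,2^{k+1})$.

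The heart of the proof is then to show that infinitely many such circuits survive in the full vacant set $\mathcal V$. A naive ``no large ball at all'' conditioning cannot work, since for any fixed $R$ the expected number of large balls hitting $B(0,L)$ grows like $L^2$. Instead I would decompose the large balls by scale: at scale $2^k$, separate the genuinely large balls (radius $\ge 2^{k-1}$) from the moderate ones (radii in $(R,2^{k-1})$). For the former, hypothesis~$(\mathbf H_2)$ yields
\begin{equation*}
\sum_{k\ge 0}\E\bigl[\#\{\text{genuinely large balls hitting }\annulus(2^k,2^{k+1})\}\bigr]\;\lesssim\;\lambda\int_{R}^{\infty} r^2\,\mu(\d r)<\infty,
\end{equation*}
so by Borel--Cantelli only finitely many annuli are affected. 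The moderate balls can be absorbed by raising the cutoff at scale $2^k$ and re-applying the RSW bound of~\cite{ahltastei} to the rescaled truncated model. By FKG (both the small-ball circuit event and the ``no obstructing large ball'' event are decreasing in the Poisson process), this produces vacant circuits of $\mathcal V$ around the origin on infinitely many dyadic scales; combined with the standard construction of infinite clusters from uniformly positive circuit probabilities (radial crossings glue the circuits together), one obtains an unbounded component of $\mathcal V$, and by ergodicity $\lambda\le\lambda_c^\star$.

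The main technical difficulty I anticipate is the multi-scale decomposition and re-application of RSW: one must choose the cutoffs and annuli carefully so that the circuit lower bounds from~\cite{ahltastei} survive conditioning on the absence of genuinely large balls, while the re-absorption of moderate balls does not destroy the uniform RSW input. The finite second moment~$(\mathbf H_2)$ is precisely the hypothesis needed to make the relevant tail sums converge, dovetailing with the necessity side recorded in~\eqref{eq:20}.
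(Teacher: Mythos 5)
Your reduction of the theorem to the statement ``$\lambda<\lambda_c$ implies vacant percolation'' is correct, and the easy inequality $\lambda_c^\star\le\lambda_c$ is handled essentially as in the paper (Corollary~4.5 of~\cite{ahltastei}). The fatal problem is the Borel--Cantelli step. Your displayed bound is off by a logarithm: a single ball of radius $z$ counts as a ``genuinely large ball'' at every scale $k$ with $2^{k-1}\le z$, and whenever such a ball sits within distance of order $z$ of the origin it hits \emph{all} of the roughly $\log_2 z$ annuli $\annulus(2^k,2^{k+1})$ with $2^k\lesssim z$. Computing the expectation correctly, the number of balls of radius at least $2^{k-1}$ meeting $\annulus(2^k,2^{k+1})$ is Poisson with mean comparable to $\lambda\int_{2^{k-1}}^\infty z^2\,\mu(\d z)$, and summing over $k$ produces $\lambda\int r^2\log_+ r\,\mu(\d r)$, not $\lambda\int r^2\,\mu(\d r)$. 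So your argument secretly requires the stronger hypothesis~\eqref{eq:15} of~\cite{ahltastei} --- which is exactly the hypothesis this theorem is supposed to remove --- and under $(\mathbf H_2)$ alone the sum may diverge, so you cannot conclude that only finitely many annuli are affected. This is not a repairable detail of your scheme: any argument that pays for large balls one at a time, scale by scale, runs into the same logarithm; this is precisely the ``long-range dependence'' obstruction described in the paper's introduction as the reason the standard renormalization fails.

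The paper's way around this is a Peierls/necklace argument exploiting a topological fact your scheme does not use: a single disc, being simply connected, cannot surround $B(0,L)$ without covering it, so any occupied circuit around $B(0,L)$ that avoids $B(0,L)$ must contain at least \emph{two} discs, and the proof discriminates on the radius $r$ of the \emph{second largest} disc of the blocking circuit. Having two discs of radius at least $r$ near the origin costs of order $(\lambda p(r))^2$ by Lemma~\ref{lem:3} --- the \emph{square} of the one-disc cost --- and this square is what restores summability over dyadic scales under $(\mathbf H_2)$ alone; the rest of the circuit, made of discs of radius at most $3r$, is controlled by the exponential decay for the truncated process (Lemmas~\ref{lem:1} and~\ref{lem:2}), which needs only~\eqref{eq:2}. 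To salvage your multi-scale approach you would have to inject an analogous ``cost of the second largest obstruction'' idea, at which point you would essentially be reproducing the paper's proof of Theorem~\ref{thm:1}.
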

\begin{remark}
  %Note that using the above result, we can obtain a more detailed description of the vacant set throughout the phases $\lambda < \lambda_c$ and $\lambda = \lambda_c$, see Theorems~1.1 and 1.2 in \cite{ahltastei} for more details.
  By combining Theorem~\ref{thm:1} below and the results in~\cite{ahltastei}, we obtain a more detailed description of the vacant set in the phases $\lambda > \lambda_c^\star$ and $\lambda = \lambda_c^\star$. More precisely, one can show that Theorem~1.2 in~\cite{ahltastei} holds assuming only the finite second moment condition~$(\mathbf H_2)$.
\end{remark}

In higher dimensions, as a corollary to Theorem~\ref{thm:cor}, we find that finite moment of order $d$ is also sufficient for the existence of an unbounded vacant component at small densities.

\begin{theorem}\label{thm:3}
  Consider Poisson Boolean percolation in $\mathbb{R}^d$, for some $d\ge2$, and assume that \eqref{eq:1} holds. Then,
  \begin{equation}
    \label{eq:16}
    \lambda_c^\star\in(0,\infty).
  \end{equation}
\end{theorem}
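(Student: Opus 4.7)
The plan is to prove the two inequalities $\lambda_c^\star > 0$ and $\lambda_c^\star < \infty$ separately. The case $d = 2$ is immediate from Theorem~\ref{thm:cor} combined with \eqref{eq:20}, so the real content lies in dimension $d \geq 3$.

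For the lower bound $\lambda_c^\star > 0$, I would reduce to the planar case by a slicing argument: intersect the $d$-dimensional Boolean model with the coordinate plane $P := \R^2 \times \{0\}^{d-2}$. By the standard projection/marking property of Poisson processes, the resulting family of planar discs forms a Poisson Boolean model on $P$: a $d$-dimensional ball centered at $(y, z) \in \R^2 \times \R^{d-2}$ of radius $r$ contributes a disc of center $y$ and radius $\sqrt{r^2 - |z|^2}$ whenever $|z| \leq r$. A short computation using the change of variables $z = ru$ shows that the intensity of the slice model is proportional to $\lambda \int r^{d-2}\,\mu(\d r)$ and that the second moment of its radius distribution is proportional to $\int r^d\,\mu(\d r)$; both are finite under $(\mathbf H_d)$, so the slice satisfies $(\mathbf H_2)$. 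Applying Theorem~\ref{thm:cor} (together with \eqref{eq:20}) to the planar slice then gives a strictly positive vacant threshold for the slice, and since the slice's vacant set is contained in $\mathcal{V}$, any unbounded vacant component in the slice is also an unbounded vacant component of the full $d$-dimensional model.

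For the upper bound $\lambda_c^\star < \infty$ in dimension $d \geq 3$, the plan is to truncate the radii. Since $(\mathbf H_d)$ forces $\lambda_c \in (0, \infty)$ via \eqref{eq:20}, the measure $\mu$ must charge $(0, \infty)$, so we may pick $0 < r_* < R_*$ with $\mu([r_*, R_*]) > 0$. The sub-process of balls with radius in $[r_*, R_*]$ is itself a Boolean model with \emph{bounded} radii and intensity $\lambda\,\mu([r_*, R_*])$, which tends to infinity with $\lambda$. For bounded-radius Boolean models the vacant critical parameter is classically known to be finite in every dimension $d \geq 2$, via a coarse-graining argument: after tiling $\R^d$ by cubes small enough to be covered by a single disc of radius $r_*$, each cube is covered by some ball of the sub-process with probability tending to $1$ as $\lambda \to \infty$, and a Peierls-type contour bound on the remaining uncovered cubes excludes unbounded vacant paths. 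Since the vacant set of the truncated sub-process contains $\mathcal{V}$, one concludes $\lambda_c^\star < \infty$.

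The main obstacle is the slicing step: one must check that the Poisson projection really produces an (independent) 2D Boolean model and that $(\mathbf H_d)$ is precisely the condition transferring to $(\mathbf H_2)$ for the planar model. The upper bound is then a standard truncation argument appealing to the classical finiteness of $\lambda_c^\star$ for bounded-radius Boolean models.
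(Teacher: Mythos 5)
Your proposal is correct and follows essentially the same route as the paper: reduce to the planar case by restricting the $d$-dimensional model to a two-dimensional subspace and then invoke Theorem~\ref{thm:cor}. The one real difference is how you verify $(\mathbf H_2)$ for the induced planar model: you compute the slice's intensity and second moment explicitly (correctly, giving $\int r^{d-2}\,\mu(\d r)$ and $\int r^d\,\mu(\d r)$ up to constants), whereas the paper sidesteps this computation entirely by using the equivalence~\eqref{e:cover_space} --- under $(\mathbf H_d)$ the space, hence the plane, is not a.s.\ covered, so the induced model satisfies $(\mathbf H_2)$. Your explicit treatment of the finiteness $\lambda_c^\star<\infty$ via truncation to bounded radii and a coarse-graining/Peierls bound is also sound; the paper omits this direction, implicitly treating it as classical.
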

Note that $\lambda_c^\star=0$ when \eqref{eq:1} is not satisfied. Hence, the theorem above shows that the phase transition for the vacant set is non trivial if and only if the hypothesis~\eqref{eq:1} is satisfied. Namely, Eq.~\eqref{eq:20} also holds for $\lambda_c^\star$ in place of $\lambda_c$.

As mentioned above, the duality relation \eqref{eq:4} was previously established in \cite{roy90} for bounded radii, and in~\cite{ahltastei} under the assumption
\begin{equation}
\int_0^\infty r^2\log r \,\mu(dr)<\infty.\label{eq:15}
\end{equation}
These previous approaches are based on Russo-Seymour-Welsh techniques and renormalization of crossing probabilities. The hypothesis \eqref{eq:15} implies a decorrelation property of the crossing probabilities of the vacant set, that is sufficient for the standard renormalization techniques to apply. The new step in the proof of Theorem~\ref{thm:cor} is the following theorem, which we prove using specific properties of Boolean percolation valid witout the hypthesis~\eqref{eq:15}, contrary to the standard renormalization 
techniques used in \cite{ahltastei}.
% More precisely, in \cite{ahltastei}, the assumption \eqref{eq:15} is only used  to prove that there exists an unbounded vacant component when as $3\ell$ by $\ell$ rectangle is crossed with high probability in the long direction by a vacant path.
Define $\cross(\ell, r)$ as the event that the box $[0, \ell] \times [0, r]$ can be crossed from left to right by an occupied path.

\begin{theorem}\label{thm:1}
  Consider Poisson Boolean percolation in $\R^2$, and assume the finite second moment hypothesis~$(\mathbf H_2)$. If $\lambda>0$ is such that
  \begin{equation}
    \label{eq:2}
    \lim_{\ell \to\infty}\mathbb P_\lambda[\cross(\ell,3\ell)]=0,
  \end{equation}
  then there exists an unbounded connected vacant component almost surely.
\end{theorem}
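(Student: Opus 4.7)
The plan is to deduce from~\eqref{eq:2}, via planar duality, the existence of vacant circuits around the origin at arbitrarily large scales with probability close to $1$, and then to chain them into an unbounded vacant component using the FKG inequality for the Poisson point process together with planar topology.

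By standard planar duality for the Boolean model (valid almost surely under $(\mathbf H_2)$, since ball boundaries are nowhere dense), the hypothesis translates into
\[
  \Pr_\lambda\big[\text{vertical vacant crossing of } [0,\ell]\times[0,3\ell]\big]\;=\;1-\Pr_\lambda[\cross(\ell,3\ell)]\;\xrightarrow[\ell\to\infty]{}\;1.
\]
Replacing $\ell$ by $2\ell,3\ell,\ldots$ in~\eqref{eq:2} yields vacant crossings in the easy direction of rectangles of aspect ratio $1{:}3$, at all sufficiently large scales, with probability tending to~$1$. The standard four-rectangle construction then produces a vacant circuit around the origin in the square annulus $A_\ell:=[-3\ell,3\ell]^2\setminus(-\ell,\ell)^2$ with probability at least $1-4\Pr_\lambda[\cross(2\ell,6\ell)]\to 1$.

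The second ingredient is the existence of vacant radial arms traversing several consecutive such annuli. Combining a bounded number of easy-direction $1{:}3$ crossings of horizontal and vertical rectangles via FKG, one obtains vacant radial crossings of $A(\ell,K\ell):=[-K\ell,K\ell]^2\setminus(-\ell,\ell)^2$ for any fixed $K\ge 3$, with probability tending to~$1$. Fix such a $K$ and choose scales $\ell_1<\ell_2<\cdots$ with $\ell_{n+1}\le K\ell_n$ so that both the circuit event $E_n$ in $A_{\ell_n}$ and the radial-arm event $F_n$ across $A(\ell_n,3\ell_{n+1})$ satisfy $\Pr_\lambda[E_n],\Pr_\lambda[F_n]\ge 1-2^{-n}$. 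Since every $E_n$ and $F_n$ is decreasing in the Poisson point process, the FKG inequality yields
\[
  \Pr_\lambda\Big[\bigcap_{n\ge 1}(E_n\cap F_n)\Big]\;\ge\;\prod_{n\ge 1}\Pr_\lambda[E_n]\,\Pr_\lambda[F_n]\;>\;0.
\]
On this event, each radial arm $F_n$ traverses both annuli $A_{\ell_n}$ and $A_{\ell_{n+1}}$, and is therefore forced by planar topology to intersect both circuits $E_n$ and $E_{n+1}$; the union of all circuits and arms thus forms a single unbounded connected subset of $\mathcal V$. Ergodicity of the Poisson point process then upgrades this positive probability to~$1$.

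The principal obstacle, in contrast with~\cite{ahltastei}, is that under $(\mathbf H_2)$ alone one cannot invoke decorrelation between crossings at different scales---large balls may couple distant regions---so the renormalization available under~\eqref{eq:15} is lost. The present scheme sidesteps renormalization by relying only on FKG for the Poisson point process, which holds unconditionally, combined with two delicate structural ingredients: (i)~constructing the radial arms $F_n$ from boundedly many easy-direction $1{:}3$ crossings, exploiting the monotonicity of $\Pr_\lambda[\cross(\ell,H)]$ in $H$ together with specific geometric features of the Boolean model; and (ii)~selecting the scale sequence $(\ell_n)$ with bounded ratio $\ell_{n+1}/\ell_n$, tight enough to make $1-\Pr_\lambda[E_n]$ and $1-\Pr_\lambda[F_n]$ summable yet loose enough to accommodate~(i).
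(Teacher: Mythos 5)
Your overall architecture (duality, circuits in annuli, radial arms, FKG over decreasing events, ergodicity) is sound in each individual step, but it does not prove the theorem as stated, because the hypothesis~\eqref{eq:2} is purely qualitative: it gives $\eps(\ell):=\Pr_\lambda[\cross(\ell,3\ell)]\to 0$ with \emph{no rate}, while your scheme secretly requires a quantitative, Dini-type summability of $\eps$. Concretely, to get $\prod_n\Pr_\lambda[E_n]\Pr_\lambda[F_n]>0$ you need $\sum_n(1-\Pr_\lambda[F_n])<\infty$. Each arm $F_n$ must bridge the annulus $A(\ell_n,3\ell_{n+1})$ and is assembled from elementary crossings, essentially one per multiplicative factor of $3$ between $\ell_n$ and $\ell_{n+1}$; with only a union bound available for the failure probabilities, the best you can say is $1-\Pr_\lambda[F_n]\le C\sum_{j}\tilde\eps(3^j\ell_n)$ where $\tilde\eps(\ell)=\sup_{\ell'\ge\ell}\eps(\ell')$. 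Summing over $n$, the scales $3^j\ell_n$ sweep out (at least) all powers of $3$ between consecutive $\ell_n$'s, so
\[
\sum_n\big(1-\Pr_\lambda[F_n]\big)\;\gtrsim\;\sum_{k\ge k_0}\tilde\eps(3^k)\;\asymp\;\int^\infty \tilde\eps(\ell)\,\frac{\d\ell}{\ell},
\]
\emph{whatever} sequence $(\ell_n)$ you pick. If $\eps$ tends to $0$ slower than any power of $\log$ (which \eqref{eq:2} permits), this diverges and the FKG product is $0$. Your stated variant with bounded ratios $\ell_{n+1}\le K\ell_n$ is even more demanding: it forces $\ell_n\le K^n\ell_1$, so the requirement $1-\Pr_\lambda[E_n]\le 2^{-n}$ already needs $\eps$ to decay polynomially in $\ell$. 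The tension you flag in your last paragraph between ``tight enough'' and ``loose enough'' is not a technicality to be tuned away; it is exactly the obstruction. Under the stronger moment assumption~\eqref{eq:15} one can bootstrap \eqref{eq:2} into such quantitative decay by renormalization (this is what \cite{ahltastei} does), but under $(\mathbf H_2)$ alone that bootstrap is unavailable --- which is the whole point of the theorem.

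The paper's proof takes a genuinely different route precisely to avoid needing any rate for $\eps$. It bounds directly the probability of an occupied circuit (``necklace'') around $B(0,L)$ by discriminating on the radius $r$ of the \emph{second largest} disc in the necklace: once all but one disc have radius at most $r$, the necklace forces a long connected path in the truncated occupied set $\mathcal O_r$, whose probability decays exponentially in (distance)$/r$ because truncation restores finite-range dependence and lets a $k$-dependent renormalization run off the qualitative hypothesis \eqref{eq:2} alone; the sum over the dyadic scales of $r$ is then controlled by the second moment of $\mu$, not by crossing probabilities. If you want to rescue a circuit-based argument, you would first need to prove such a quantitative decay statement for vacant crossings, and that is essentially equivalent to redoing the paper's truncation analysis.
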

\begin{remark}\label{rem:1}
  In a parallel work \cite{penrose}, Penrose obtained a nice proof of the theorem above, using a refined renormalization argument on the crossing probabilities. In the present paper we present a different approach, based on a Peierls-type argument.
\end{remark}

As will be explained in Section~\ref{sec:orga} below, Theorems~\ref{thm:cor} and \ref{thm:3} can easily be derived from Theorem~\ref{thm:1}. The core of the paper is thus devoted to the proof of Theorem~\ref{thm:1}. The proof will follow a Peierls-type argument, but requires some modifications. In order to present the difficulties related to our framework, let us present standard arguments that would prove Theorem~\ref{thm:1} in the radius 1 case (which corresponds to $\mu=\delta_1$). In this case, using a renormalization method based on independence, we can show that \eqref{eq:2} implies the exponential decay of the connection probabilities for the occupied set: there exists a constant $c>0$ such that the probability of an occupied path from a box $\Lambda(x,1)$ around a given point $x$ with radius 1 to distance $r$ around it satisfies
\begin{equation}
  \label{eq:18}
  \mathbb P_\lambda\big[\Lambda(x,1) \stackrel{\mathcal O}{\longleftrightarrow} \Lambda(x,r)^c \big]\le e^{-cr}. 
\end{equation}
To prove that the vacant percolates, one can first observe that   
\begin{equation}
  \label{eq:19}
  \mathbb P_\lambda\big[B(0,L)\ \stackrel{\mathcal V}{\not\!\!\!\longleftrightarrow} \infty\big]= \mathbb P_\lambda \big[\exists \text{ an occupied circuit surrounding $B(0,L)$}\big]
\end{equation}
and then use the exponential decay to show that that the right hand side above tends to $0$ as $L$ tends to infinity. This argument fails when $\mu$ has a fat tail, due long-range dependencies. Indeed, if we choose $\mu$ with a sufficiently fat tail (but still satisfying ($\mathbf H_2$)), the probability that a disc of radius $L$ covers a given point may decay arbitrarily slowly with $L$, and we cannot expect to have a fast decay as in \eqref{eq:18}. Nevertheless, by considering carefully the properties of the combinatorial structures blocking a vacant path (sequences of discs encircling a large box, called ``necklaces'')  one may sill prove that an occupied circuit around the origin is unlikely. A key factor in the calculation will be to discriminate on the radius $r$ of the \emph{second} largest disc in a necklace. Then the rest of the necklace must still contain long paths of discs of size smaller than $r$ and we can quantitatively bound the probability of such events.

%\begin{remark}
%  Note that the assumption \eqref{e:finite_d_moment} is sharp in the sense that if it is violated, then the entire space $\mathbb{R}^d$ is covered by balls, as observed in \cite{hall85}:
%  \begin{display}
%    \label{e:cover_space}
%    in $\mathbb{R}^d$, the whole space is covered by balls almost surely if and only if the condition \eqref{e:finite_d_moment} is violated.
%  \end{display}
%  In \cite{gouere08}, Gou\'er\'e proved that under the assumption \eqref{e:finite_d_moment} the occupied set does not percolate for small enough $\lambda$, but it did not consider the vacant cluster.
%\end{remark}

The paper is organized as follows. In Section~\ref{sec:orga}, we give the main notation, we derive Theorems~\ref{thm:cor} and \ref{thm:3} from Theorem~\ref{thm:1}, as well as provide some useful lemmas. In Section \ref{sec:proof-theorem}, we complete the proof of Theorem \ref{thm:1}.

%\section{Preliminaries}
%\label{sec:preliminaries}
\section{Preliminaries}\label{sec:orga}

% (The constants $C$ and $c$ may change from one line to another.)

\paragraph{Notation}
We shall use the notation $B(x,r)$ and $\Lambda(x,r)$ to respectively denote the Euclidean ball and $\ell_\infty$-ball centred at $x\in\R^2$ with radius $r>0$.
Let $\omega$ be a Poisson point process on $\R^2\times\R_+$ of intensity $\lambda\, dx\,\mu(dz)$, for some $\lambda\ge0$. We denote by $\Pr_\lambda$ the law of $\omega$.
Based on $\omega$ we obtain a partition of $\R^2$ into an occupied set $\mathcal{O}$ and vacant set $\mathcal{V}=\R^2\setminus\mathcal{O}$ as follows:
$$
\mathcal{O}(\omega):=\bigcup_{(x,z)\in\omega}B(x,z).
$$

\paragraph{Derivation of Theorems~\ref{thm:cor} and \ref{thm:3} from Theorem \ref{thm:1}}

\begin{proof}[Proof of Theorem~\ref{thm:cor}]
  First, by Corollary 4.5 in \cite{ahltastei}, we have $\lambda_c^\star \le \lambda_c$. To prove the other direction, pick $\lambda<\lambda_c$. By Theorem 1.1 in \cite{ahltastei}, we have $\lim_{\ell \to\infty}\mathbb P_\lambda[\cross(\ell,3\ell)]=0$ which implies $\lambda\le \lambda_c^\star$ by Theorem~\ref{thm:1}, and hence that $\lambda_c^\star\ge \lambda_c$.
\end{proof}

\begin{proof}[Proof of Theorem~\ref{thm:3}]
  We first note that the restriction of the occupied set of Poisson Boolean percolation in $\mathbb{R}^d$ to a two dimensional subspace is in law identical to Poisson Boolean percolation in $\mathbb{R}^2$, now with some modified radius distribution. 
  Instead of calculating how this procedure modifies the radius distribution we can use~\eqref{e:cover_space}, together with the assumption of the corollary, to conclude that the space is not almost surely covered.
  Therefore its restriction to $\mathbb{R}^2$ is also not almost surely covered, so again by~\eqref{e:cover_space} we conclude that condition~($\mathbf H_2$) is satisfied for the induced planar model.
  By Theorem~\ref{thm:cor} we conclude that in the induced model there is almost surely an unbounded vacant component for small enough values of $\lambda$, concluding the proof.
\end{proof}

\paragraph{Connection probabilities for truncated radii}

As explained in the introduction, we cannot have in general quantitative estimates on the decay of connection probabilities, due to the presence of large discs. Nevertheless, if one removes the balls of large radius, we recover the standard exponential decay \eqref{eq:18}, after a suitable rescaling by the length of truncation. This is the content of the next lemma. Let 
$$
\mathcal{O}_\ell(\omega):=\bigcup_{(x,z)\in\omega:\,z\le\ell}B(x,z).
$$

\begin{lemma}
  \label{lem:1}
  Assume that $\lambda\ge0$ is such that~\eqref{eq:2} holds.
  Then there exist constants $\ell_0\ge1$ and $c>0$ such that for every $x\in \mathbb R^2$ and $L\ge\ell \ge \ell_0$, we have
  \begin{equation*}
    \mathbb P_\lambda[\Lambda(x,\ell)\stackrel{\mathcal O_\ell}{\longleftrightarrow} \Lambda(x,L)^c ] \le \frac1c\, e^{-c L/\ell}.
    \label{eq:3}
  \end{equation*}
\end{lemma}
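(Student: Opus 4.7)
My approach is to view $\mathcal{O}_\ell$ as a Boolean percolation model with radii bounded by $\ell$ (hence with range of dependence at most $2\ell$) and apply a shell-by-shell renormalization, in which the hypothesis~\eqref{eq:2} supplies subcriticality at the scale $\ell$. This scale will play the role of the correlation length of $\mathcal{O}_\ell$.

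By monotonicity $\mathcal{O}_\ell \subseteq \mathcal{O}$, the probability of crossing an $n \times 3n$ rectangle left-to-right by an $\mathcal{O}_\ell$-path is at most $\mathbb P_\lambda[\cross(n, 3n)]$; by~\eqref{eq:2} I fix $\ell_0 \ge 1$ so that the latter is at most $\epsilon_0$ for every $n \ge \ell_0$, where $\epsilon_0 \in (0, 1)$ is a small constant to be chosen below. Moreover, events measurable with respect to $\mathcal{O}_\ell$ and supported on subsets $A, A' \subseteq \R^2$ with $\dist(A, A') > 2\ell$ are independent, being determined by disjoint pieces of the underlying Poisson process on $\R^2 \times [0, \ell]$. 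For $L \ge \ell \ge \ell_0$ and $x \in \R^2$, I partition the annulus $\Lambda(x, L) \setminus \Lambda(x, \ell)$ into $m = \lfloor (L-4\ell)/(6\ell)\rfloor$ concentric square shells $S_i = \Lambda(x, (6i+4)\ell) \setminus \Lambda(x, (6i+1)\ell)$ of radial thickness $3\ell$, separated by radial gaps of size $3\ell$. Their $\ell$-neighborhoods are then pairwise disjoint, so the events $E_i = \{S_i \text{ is radially traversed in } \mathcal{O}_\ell\}$ are mutually independent; since every $\mathcal{O}_\ell$-path from $\Lambda(x, \ell)$ to $\Lambda(x, L)^c$ must traverse each shell radially, the probability in the lemma is at most $\prod_{i=0}^{m} \mathbb P_\lambda[E_i]$, and it suffices to establish a uniform bound $\mathbb P_\lambda[E_i] \le q < 1$.

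This uniform single-shell bound is the main obstacle. A radial traversal of $S_i$ forces the crossing, across its short dimension $3\ell$, of one of the four side rectangles of $S_i$, which have dimensions $O(L) \times 3\ell$. A naive cover by overlapping $\ell \times 3\ell$ sub-rectangles only yields the bound $O(L/\ell)\cdot \epsilon_0$, which is not uniform in $L$. The fix is to upgrade~\eqref{eq:2} to a quantitative, super-polynomial decay of the crossing probability of $n \times 3\ell$ rectangles in $\mathcal{O}_\ell$ as $n/\ell \to \infty$. This is the standard sharpness/renormalization argument for $2\ell$-dependent, FKG-satisfying percolation processes (as in~\cite{ahltastei}, or via a Menshikov/Grimmett-style iteration): provided $\epsilon_0$ is sufficiently small, one iterates over a geometrically growing sequence of scales to obtain super-polynomial (indeed stretched-exponential) decay of crossings of any fixed aspect ratio, with constants uniform in $\ell \ge \ell_0$. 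This absorbs the $O(L/\ell)$ covering factor, gives $\mathbb P_\lambda[E_i] \le q$, and the product over the $m \sim L/\ell$ independent shells yields the desired $\tfrac{1}{c}\,e^{-cL/\ell}$ for a suitable $c > 0$.
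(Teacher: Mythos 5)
There is a genuine gap, and it sits exactly where you place ``the main obstacle'': the uniform single-shell bound $\mathbb P_\lambda[E_i]\le q<1$ is false, and the fix you propose cannot repair it. A radial traversal of the shell $S_i$ is a crossing of a $3\ell$-thick annulus, of perimeter of order $i\ell$, across its \emph{thin} direction. Such a traversal only forces a scale-$\ell$ event (a local radial bridge) to occur \emph{somewhere} along the perimeter; it does not force any long crossing. For any fixed $\lambda>0$ and non-degenerate radius law, each of the $\asymp i$ essentially independent locations along the perimeter carries a fixed positive probability $\delta$ of containing such a bridge, so $\mathbb P_\lambda[E_i]\ge 1-(1-\delta)^{ci}\to 1$ as $i\to\infty$. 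Consequently $\prod_i\mathbb P_\lambda[E_i]$ does not tend to $0$ (it is bounded below by a convergent infinite product), and the shell decomposition yields no decay at all. The proposed upgrade of~\eqref{eq:2} to super-polynomial decay for $n\times 3\ell$ rectangles concerns the \emph{long-way} crossing (traversing the length $n$), which is irrelevant here: the crossing forced by a shell traversal is the short-way one, whose probability \emph{increases} to $1$ as $n/\ell\to\infty$, so no renormalization can make it decay and absorb the $O(L/\ell)$ covering factor.

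The underlying problem is that cutting the crossing into independent concentric shells discards the information that the traversals of successive shells must join into a single connected path, and it is exactly this connectivity that defeats the $O(L/\ell)$ positional entropy. The paper's proof keeps the connectivity: it sets $X(y)=1$ when $\Lambda(\ell y,\ell)$ is joined in $\mathcal O_\ell$ to $\Lambda(\ell y,3\ell)^c$, notes that $X$ is a $10$-dependent field on $\Z^2$ with $\mathbb P_\lambda[X(y)=1]<4\eps$, and observes that a crossing from $\Lambda(0,\ell)$ to $\Lambda(0,L)^c$ forces an $X$-open nearest-neighbour path of length $\lfloor L/\ell\rfloor$ \emph{started at the origin}. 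The union bound is then over the at most $4^k$ lattice paths of length $k$ from $0$: the location of each successive block is dictated by the previous one, so one pays an entropy factor $4$ per step against a per-step probability that Liggett--Schonmann--Stacey domination makes smaller than $1/4$ once $\eps$ is small. Your first two steps (monotonicity $\mathcal O_\ell\subseteq\mathcal O$ and the finite range $2\ell$ of dependence) are correct and are exactly the paper's starting point; it is the concentric-shell geometry that must be replaced by this path-following argument.
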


\begin{proof}
  Fix $\eps>0$ and let $\ell_0\ge1$ be such that $\mathbb P_\lambda[\cross(\ell, 3\ell)]<\eps$
  for every $\ell \ge \ell_0$. Define for $\ell\ge \ell_0$ the following
  percolation process on $\mathbb Z^2$: For $y\in \mathbb Z^2$, set $X(y)=1$ if
  there exists a path in $\mathcal{O}_\ell$ from $\Lambda(\ell y,\ell)$ to $\Lambda(\ell y,3\ell)^c$.
  When $X(y)=1$ we say that $y$ is \emph{$X$-open}.
  Then $X$ defines a $10$-dependent percolation process on $\Z^2$, in the sense of~\cite{ligschsta97}, and satisfies $\mathbb P_\lambda[X(y)=1]< 4\eps$.
  Therefore, given that $\eps>0$ is small enough, the probability that there exists an $X$-open nearest-neighbour path\footnote{That is, made up of $X$-open vertices.} of length $k$ in $\Z^2$ from the origin decays exponentially in $k$.%\footnote{See, for instance, Lemma~3.11 in~\cite{bolrio06a}.}

  Observe next that if in $\mathbb R^2$ there exists
  an occupied path from $\Lambda(0,\ell)$ to $\Lambda(0,L)^c$, using only
  balls with radius smaller than $\ell$, then there must exist an $X$-open path in $\Z^2$, starting at the origin, of length
  $\lfloor L/\ell\rfloor$.
  Therefore, for some constant $c>0$,
  $$
  \mathbb P_\lambda[\Lambda(x,\ell)\stackrel{\mathcal O_\ell}{\longleftrightarrow} \Lambda(x,L)^c]\le \frac{1}{c}\,e^{-c\lfloor L/\ell \rfloor},
  $$
  as required.
\end{proof}

A direct consequence of the exponential decay above is the following estimate, that will be useful to apply the Peierls argument. Let $E_\ell(L)$ be the event that for some $x\in \R^2$ satisfying $|x|\ge L$ there is a path in $\mathcal{O}_\ell$ from $x$ to $B(x,|x|)^c$.

\begin{lemma} \label{lem:2}
  Assume that $\lambda\ge0$ is such that~\eqref{eq:2} holds.
  Then there exist constants $\ell_0\ge1$ and
  $c>0$ such that for every $L\ge\ell \ge \ell_0$ we have
  \begin{equation*}
    \mathbb P_\lambda[E_\ell(L)]\le \frac1c\,  e^{-c L/\ell}.
    % \label{eq:10}
  \end{equation*}
\end{lemma}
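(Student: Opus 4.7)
The plan is a union bound reducing to Lemma~\ref{lem:1} via a dyadic decomposition of the region $\{|x|\ge L\}$. Decompose this region into annuli $A_k:=\{x\in\R^2 : 2^k\le |x|<2^{k+1}\}$, over integers $k\ge k_0$, where $k_0$ is the least integer with $2^{k_0+1}>L$ (so in particular $2^{k_0}\ge L/2$). For each $k$, cover $A_k$ by a collection $\Gamma_k\subset \ell\Z^2$ of centers with $|\Gamma_k|\le C_1\,4^k/\ell^2$, chosen so that $A_k\subseteq \bigcup_{y\in\Gamma_k}\Lambda(y,\ell)$.

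Now assume $E_\ell(L)$ holds and fix $x$ with $|x|\ge L$ realising the event, together with $k\ge k_0$ satisfying $x\in A_k$ and $y\in\Gamma_k$ with $x\in\Lambda(y,\ell)$. Because $\|u\|_\infty\ge |u|/\sqrt 2$ in $\R^2$, the $\mathcal O_\ell$-path from $x$ to $B(x,|x|)^c$ reaches $\ell_\infty$-distance at least $|x|/\sqrt 2\ge 2^k/\sqrt 2$ from $x$, and therefore at least $2^k/\sqrt 2-\ell$ from $y$. Provided $L$ is large enough compared with $\ell$ (which we may ensure by enlarging $\ell_0$), this is at least $2^{k-1}$. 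Thus
$$
E_\ell(L)\ \subseteq\ \bigcup_{k\ge k_0}\bigcup_{y\in\Gamma_k}\bigl\{\Lambda(y,\ell)\stackrel{\mathcal O_\ell}{\longleftrightarrow}\Lambda(y,2^{k-1})^c\bigr\}.
$$

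Applying Lemma~\ref{lem:1} to each term (for $L$ large enough one has $2^{k-1}\ge \ell_0$ for all $k\ge k_0$) and the union bound yields, with $c_1$ the constant from that lemma,
$$
\mathbb P_\lambda[E_\ell(L)]\ \le\ \sum_{k\ge k_0}\frac{C_1\,4^k}{\ell^2}\cdot\frac{1}{c_1}\,\exp\!\Bigl(-c_1\,\frac{2^{k-1}}{\ell}\Bigr).
$$
The exponential dominates the polynomial in $2^k/\ell$, so the series is geometrically decreasing and bounded by a constant multiple of its first term, which is of order $(L/\ell)^2 e^{-c_1 L/(4\ell)}$. Absorbing the polynomial prefactor into the exponential at the cost of lowering the constant gives the desired estimate $\frac1c e^{-cL/\ell}$, adjusting the prefactor to handle the trivial regime where $L/\ell$ is bounded.

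I do not expect any genuine obstacle: the only points requiring care are the comparison between Euclidean and $\ell_\infty$ balls, which costs only a factor of $\sqrt 2$, and ensuring that the scale $2^{k-1}$ appearing after the covering step still satisfies the hypothesis of Lemma~\ref{lem:1}, which is automatic after enlarging $\ell_0$.
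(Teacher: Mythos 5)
Your proof is correct and follows essentially the same route as the paper: both reduce $E_\ell(L)$ to a union over boxes $\Lambda(y,\ell)$, $y\in\ell\Z^2$, of the connection events controlled by Lemma~\ref{lem:1}, and then sum the exponential tails. Your dyadic-annulus bookkeeping and the paper's direct sum $\sum_{y\in\ell\Z^2\setminus B(0,L-\ell)}\frac1c e^{-c|y|/\ell}$ are just two ways of evaluating the same union bound, and your handling of the bounded regime $L/\ell=O(1)$ by shrinking $c$ is the right fix for the constant-factor losses.
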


\begin{proof}
  If $E_\ell(L)$ occurs, then there must exist a point $y\in \ell\mathbb Z^2\setminus B(0,L-\ell)$ such that $\Lambda(y,\ell)$ is connected to distance $|y|-\ell$ around it. By the exponential decay of Lemma~\ref{lem:1}, we obtain
  \begin{equation}
    \label{eq:21}
    \mathbb P_\lambda[E_\ell(L)] \le \sum_{y\in \ell\mathbb Z^2\setminus B(0,L-\ell)} \frac1c e^{-c |y|/\ell}\le \frac1{c'}e^{-c'L/\ell}
  \end{equation}
for some $c'>0$.
\end{proof}

\paragraph{Large discs are far from the origin}
We next estimate the probability of seeing two large discs close to the origin. Given $r,s>0$ let $F(r,s)$ be the event that there exist at least two discs $B_1, B_2$
such that for $i=1,2$,
\begin{enumerate}[(i)]
\item\label{item:1} the radius of $B_i$ satisfies $\mathrm{rad}(B_i)\ge r$, and
\item\label{item:2} the Euclidean distance between $0$ and $B_i$ satisfies $0<d(0,B_i)\le s$.
\end{enumerate}

\begin{lemma}\label{lem:3}
  For every $\lambda\ge0$ we have that
  \begin{equation*}
    \mathbb P_\lambda[F(r,s)]  \,\le\,
    \lambda^2 \bigg (\pi s^2\mu\left([r,\infty)\right)+2\pi s\int_r^\infty z \,\mu(dz) \bigg)^2.
    % \label{eq:5}
  \end{equation*}
\end{lemma}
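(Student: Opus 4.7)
The plan is to encode conditions (\ref{item:1})--(\ref{item:2}) as membership in a single measurable set $A \subseteq \R^2 \times \R_+$ and to bound $\Pr_\lambda[F(r,s)]$ by the probability that the Poisson process $\omega$ places at least two points in $A$.

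More precisely, a ball $B(x,z)$ satisfies $\mathrm{rad}(B(x,z)) \ge r$ and $0 < d(0, B(x,z)) \le s$ iff $z \ge r$ and the origin lies outside $B(x,z)$ at distance at most $s$, i.e.\ $z < |x| \le z+s$. I would therefore set
\[
A := \bigl\{(x,z) \in \R^2 \times \R_+ : z \ge r,\ z < |x| \le z+s\bigr\},
\]
so that $F(r,s) = \{\omega(A) \ge 2\}$. Since $\omega(A)$ is a Poisson random variable with mean
\[
m := \lambda \int_A dx \, \mu(dz) = \lambda \int_r^\infty \pi\bigl((z+s)^2 - z^2\bigr) \mu(dz) = \lambda \pi s^2 \mu([r,\infty)) + 2\pi \lambda s \int_r^\infty z\, \mu(dz),
\]
the desired bound follows from the elementary Poisson inequality $\Pr[\mathrm{Poisson}(m) \ge 2] \le m^2/2 \le m^2$, which one verifies by noting that $f(m) := m^2/2 - 1 + e^{-m}(1+m)$ satisfies $f(0) = 0$ and $f'(m) = m(1-e^{-m}) \ge 0$.

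There is no real obstacle here: the only computation is the area of the annulus $\{z < |x| \le z+s\}$, which equals $\pi((z+s)^2 - z^2) = \pi(2zs + s^2)$, and a standard tail estimate for a Poisson variable. The statement then falls out by collecting these pieces.
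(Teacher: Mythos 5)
Your proof is correct and follows essentially the same route as the paper: identify the qualifying balls as the points of $\omega$ in the set $\{(x,z): r\le z<|x|\le z+s\}$, compute the Poisson mean of that set, and apply a second-moment tail bound for the Poisson distribution. The only cosmetic difference is that the paper invokes the general inequality $\Pr[\mathrm{Poisson}(\nu)\ge k]\le \nu^k$ while you verify the (slightly sharper) case $k=2$ by hand; both suffice.
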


\begin{proof}%[Sketch of proof]
  The number of open balls satisfying items~(\ref{item:1}) and~(\ref{item:2}) above is a
  Poisson random variable with parameter
  \begin{align*}
    &\lambda\cdot\mathrm{Leb}\otimes\mu \left[\left\{(x,z)\in \mathbb  R^2 \times \mathbb
      R^+: r\le z<|x|\le s+z \right\}\right]\\
    &\qquad=\,\lambda\int_r^\infty
      \bigg(2\pi\int_z^{s+z}\rho\,d\rho\bigg) \mu(dz)\\
    &\qquad=\,\lambda\bigg(\pi s^2 \mu([r,\infty))+2\pi s\int_r^\infty z\, \mu(dz)\bigg).
  \end{align*}
  The result then follows since a Poisson variable with parameter $\nu$ has probability at most $\nu^k$ of being larger than or equal to $k$.
\end{proof}

\section{Proof of Theorem~\ref{thm:1}}
\label{sec:proof-theorem}

Throughout this section we shall assume  ($\mathbf H_2$) and that  $\lambda > 0$ is such that~\eqref{eq:2} holds. The proof of Theorem~\ref{thm:1} will proceed counting necklaces, which we define as follows.

\begin{definition}
  A sequence $(B_1,B_2,\ldots,B_k)$ of balls of decreasing size, i.e.\ where $\mathrm{rad}(B_1)\ge\mathrm{rad}(B_2)\ge\cdots\ge \mathrm{rad}(B_k)$, will be called a {\bf necklace around $B(0,L)$} if
  \begin{enumerate}[(i)]
  \item the complement of $\bigcup_{i\in[k]}B_i$ consists of two connected components, one bounded and one unbounded;
  \item $B(0,L)$ is contained in the bounded component;
  \item the complement of $\bigcup_{i\in[k]\setminus\{j\}}B_i$ consists of a unique connected component for every $j=1,2,\ldots,k$.
  \end{enumerate}
  % \begin{enumerate}[(i)]
  % \item\label{item:3}  $\mathrm{Rad}(B_1)\ge\mathrm{Rad}(B_2)\ge\cdots\ge \mathrm{Rad}(B_k)$;
  % \item\label{item:4} $B(0,L)$ is a subset of a bounded component of $\mathbb R^2\setminus(\cup_{i\in[|1,k|]} B_i)$;
  % \item\label{item:5} for every $1\le j\le k$, $B(0,L)$ belongs to an unbounded
  %   component of $\mathbb R^2\setminus(\cup_{i \in [|1,k|]\setminus\{j\}} B_i)$.
  % \end{enumerate}
\end{definition}
\begin{remark}
  Alternatively, we could have defined simply a necklace as a connected set of balls disjoint from $B(0,L)$ and containing an occupied circuit around the origin. The proof would follow the steps, up to minor modifications.
\end{remark}
Notice, in particular, that if $B_1,B_2,\ldots,B_k$ is a necklace around $B(0,L)$, then none of the balls intersect $B(0,L)$.  We shall proceed via a Peierls-type of argument, and show that for $L$ large enough,  the
probability that $B(0,L)$ intersect an unbounded vacant component is positive. We begin with the  dual reformulation of the problem  
\begin{align}
  \label{eq:17}
  \mathbb P[B(0,L)\ \stackrel{\mathcal V}{\not\!\!\!\longleftrightarrow} \infty\big]&= \mathbb P_\lambda \big[\exists \text{ an occupied circuit surrounding $B(0,L)$}\big]\notag\\ 
&= \mathbb  P_\lambda\big[\exists\text{ necklace around $B(0,L)$}\big].
\end{align}
The second equality is true because the number of balls that intersect a bounded region is almost surely finite (see e.g.~\cite[(2.15)]{ahltastei}). Indeed, if there exists an occupied circuit surrounding $B(0,L)$ we can assume that this circuits is included in the union of finitely many discs. We construct a necklace around $B(0,L)$ by removing some discs until (i) and (iii) are satisfied.

In order to bound the probability of a necklace around $B(0,L)$, we discriminate on the size of the second largest pearl. Fix $L\ge0$. For $0\le a\le b$, let $G_L(a,b)$ be the event that there exists a necklace $(B_1,B_2,\ldots, B_k)$ around $B(0,L)$ with $\mathrm{rad}(B_2)\in[a,b]$. Let $j_0:=\lfloor\log_3(\sqrt{L})\rfloor$. Applying the union bound to \eqref{eq:17}, we get
\begin{equation}
 \mathbb P[B(0,L)\ \stackrel{\mathcal V}{\not\!\!\!\longleftrightarrow} \infty\big] \,\le\, \Pr_\lambda[G_L(0,\sqrt{L})] + \sum_{j\ge j_0} \Pr_\lambda[G_L(3^j,3^{j+1})].\label{eq:6}
\end{equation}
We bound the terms on the right-hand side, by using the two lemmas below.

\begin{lemma}
  There exists a constant $c>0$ such that for all large $L$ we have
  \begin{equation}
    \label{eq:7}
    \mathbb P_\lambda[G_L(0,\sqrt L)]\, \le\, \frac{1}{c}\,e^{-c\sqrt L}.
    % \quad\text{and}\quad
    % \mathbb P_\lambda[G(r,3r)]\, \le\, \frac{\lambda}{c}\, p(r).
  \end{equation}
\end{lemma}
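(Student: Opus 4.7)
The plan is to show $G_L(0,\sqrt L)\subset E_{\sqrt L}(L)$ and then invoke Lemma~\ref{lem:2} with $\ell=\sqrt L$, which directly yields
$$\mathbb P_\lambda[G_L(0,\sqrt L)]\le\mathbb P_\lambda[E_{\sqrt L}(L)]\le c^{-1}e^{-c\sqrt L}$$
for $L\ge\ell_0^2$, and hence for all large $L$ after adjusting the constant.

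Suppose we are on $G_L(0,\sqrt L)$ and fix a witnessing necklace $(B_1,\ldots,B_k)$. By definition of the event, every $B_i$ with $i\ge 2$ has radius at most $\sqrt L$, so $S:=\bigcup_{i\ge 2}B_i\subset\mathcal O_{\sqrt L}$. The first key point is that $S$ is connected: the balls of a necklace are arranged cyclically along the inner boundary of the annular set $\bigcup_i B_i$, with consecutive balls intersecting, so removing the single ball $B_1$ from this cycle leaves a path whose union is connected.

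Second, a simple geometric estimate bounds the angular shadow of $B_1$ seen from the origin. Since $B_1$ is disjoint from $B(0,L)$, its center $c_1$ and radius $r_1$ satisfy $|c_1|>r_1$, and the set of angles $\theta\in[0,2\pi)$ for which the ray from the origin in direction $\theta$ meets $B_1$ is a contiguous arc of length at most $2\arcsin(r_1/|c_1|)<\pi$. Because the necklace encircles $B(0,L)$, every ray from the origin meets $\bigcup_i B_i$, so those with angle in the complementary arc $I$ (of length strictly greater than $\pi$) necessarily meet $S$; let $p_\theta$ denote the first-entry point along such a ray, so that $p_\theta\in S$ and $|p_\theta|\ge L$. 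Since $I$ has length $>\pi$, it contains two antipodal angles $\theta$ and $\theta+\pi$. Setting $x:=p_\theta$, we have $|x|\ge L$ and, by connectedness of $S\subset\mathcal O_{\sqrt L}$, a path in $\mathcal O_{\sqrt L}$ from $x$ to $p_{\theta+\pi}$, which lies on the opposite ray and therefore at distance $|p_\theta|+|p_{\theta+\pi}|\ge 2L>|x|$ from $x$. This is exactly the event $E_{\sqrt L}(L)$.

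The main obstacle is the connectedness of $S$. It follows from the cyclic structure forced by conditions~(i)--(iii) of the definition of a necklace, but a fully rigorous argument requires treating some degenerate configurations (e.g.\ tangencies between boundary circles, or the inner boundary of $\bigcup_i B_i$ failing to be a single Jordan curve) separately; these carry probability zero under the Poisson process, so they can be excluded without affecting the estimate. The rest of the argument is robust: the angular pigeonhole (a set in $[0,2\pi)$ of measure $>\pi$ must contain an antipodal pair) is elementary, and the containment in $E_{\sqrt L}(L)$ then reduces the desired bound to the exponential decay already provided by Lemma~\ref{lem:2}.
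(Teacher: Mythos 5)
Your proof is correct and follows the same overall route as the paper's: establish the inclusion $G_L(0,\sqrt L)\subseteq E_{\sqrt L}(L)$ and then invoke Lemma~\ref{lem:2} with $\ell=\sqrt L$. The difference lies in how the long path inside $\mathcal O_{\sqrt L}$ is extracted. The paper removes both $B_1$ and $B_2$, obtaining two chains of small discs joining $B_1$ to $B_2$ around $B(0,L)$, and observes that at least one of them must have diameter of order $L$; you remove only $B_1$ and use the angular-shadow/antipodal-pair argument to find two points of $S=\bigcup_{i\ge2}B_i$ on opposite rays from the origin, each at distance at least $L$ from it and hence at distance more than $|p_\theta|$ from each other. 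Your version is arguably more explicit about why the resulting path must be long. Both arguments hinge on the same delicate point, which you correctly single out: that the necklace with the largest pearl removed is connected. Note, however, that this is a deterministic topological consequence of conditions (i) and (iii) of the definition rather than something to be salvaged by discarding probability-zero configurations: if $S$ split as $V_1\sqcup V_2$, then neither $B_1\cup V_1$ nor $B_1\cup V_2$ could separate $B(0,L)$ from infinity (otherwise every pearl of the other piece would be removable, contradicting (iii)), and Janiszewski's theorem (unicoherence of the plane), applied to these two closed sets whose intersection is the connected set $B_1$, would then force $\bigcup_i B_i$ not to separate either, contradicting (i) and (ii). Your proposed fix --- excluding tangencies and non-Jordan inner boundaries as null events --- is aimed at the wrong target, since the needed statement holds for every configuration satisfying the definition. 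With that deterministic lemma supplied, the rest of your argument (first-entry points on antipodal rays, $|p_\theta-p_{\theta+\pi}|=|p_\theta|+|p_{\theta+\pi}|>|p_\theta|$, hence $E_{\sqrt L}(L)$, hence the bound for $L\ge\ell_0^2$) is complete, and it is no less rigorous than the paper's own treatment of this step.
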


\begin{proof}
  Assume that there exists a necklace $(B_1,\ldots,B_k)$ around
  $B(0,L)$ such that $\mathrm{rad}(B_2)\le\sqrt L$. Consider a disc
  $B_j$ of the necklace that intersects $B_1$. Starting from the
  center $x_j$ of $B_j$ there must exist an open path of length at least
  $|x_j|$ made of balls with radius smaller than $\sqrt L$, see Figure~\ref{fig:necklace}. Hence the event
  $E_{\sqrt L}(L)$ occurs. So, by Lemma~\ref{lem:2}, we have
  \begin{equation*}
    % \label{eq:12}
    \mathbb P_\lambda[G_L(0,\sqrt L)] \,\le\,  \mathbb P_\lambda[E_{\sqrt L}(L)]\, \le\, \frac{1}{c}\,e^{-c\sqrt L},
  \end{equation*}
  for some $c>0$ and all large $L$.
\end{proof}

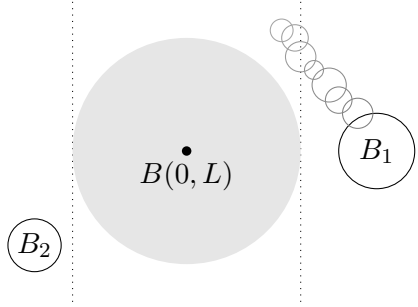
\begin{figure}[htbp]
  \begin{center}
    \begin{tikzpicture}[scale=.5]
      % draw lines
      \def\r{3};
      \draw[dotted] (-\r,-4) -- (-\r,4);
      \draw[dotted] (\r,-4) -- (\r,4);
      % draw discs
      \fill[fill=gray!20!white] (0,0) circle [radius=\r];
      \draw (5,0) circle [radius=1]; % B1
      \draw (-4,-2.5) circle [radius=.7]; % B2
      \draw[gray] (4.5,1) circle [radius=.4];
      \draw[gray!95!white] (4,1.35) circle [radius=.35];
      \draw[gray!90!white] (3.75,1.75) circle [radius=.45];
      \draw[gray!85!white] (3.35,2.15) circle [radius=.25];
      \draw[gray!80!white] (3,2.5) circle [radius=.4];
      \draw[gray!75!white] (2.85,3) circle [radius=.35];
      \draw[gray!70!white] (2.5,3.2) circle [radius=.3];
      % draw dots
      \fill[fill=black] (0,0) circle (.125);
      % labels
      \draw (5,0) node {$B_1$};
      \draw (-4,-2.5) node {$B_2$};
      \draw (0,0) node[anchor=north] {$B(0,L)$};
    \end{tikzpicture}
    \caption{On the event $G_L(a,b)$ there exist two sequences of discs with radii at most $b$ connecting $B_1$ to $B_2$, while avoiding the shaded region. At least one of the two must have length at least $L$.
    %Either clockwise or counterclockwise from $B_1$ there must be a long stretch of the necklace that consists of discs with radius at most $\sqrt{L}$.
    }
    \label{fig:necklace}
  \end{center}
\end{figure}

Define $p(r)=\pi r^2 \mu\left([r,\infty)\right)+2\pi r\int_r^\infty z\, \mu(dz)$.

\begin{lemma}
  There exists a constant $c>0$ such that for every $L,r\ge1$ we have
  \begin{equation}
    \label{eq:8}
    % \mathbb P_\lambda[G(0,\sqrt L)]\, \le\, \frac{1}{c}\,e^{-c\sqrt L}\quad\text{and}\quad
    \mathbb P_\lambda[G_L(r,3r)]\, \le\, \frac{\lambda}{c}\, p(r).
  \end{equation}
\end{lemma}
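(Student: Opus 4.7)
The plan is to apply the Campbell--Mecke formula to the ball $B_2$ of the necklace, and to bound the resulting conditional probability via Lemma~\ref{lem:1}.

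The key geometric observation is the following: on $G_L(r,3r)$, removing $B_1$ from the necklace leaves a chain of balls each of radius at most $\mathrm{rad}(B_2)\le 3r$ (since $B_2$ is the second-largest pearl). Because $B_1$ is disjoint from $B(0,L)$, it subtends a half-angle $\alpha_1<\pi/2$ at the origin, so the angular arc around the origin covered by the chain has width $2\pi-2\alpha_1>\pi$. Since $x_2$ lies in this arc, the chain reaches an angular position at angular distance at least $\pi-\alpha_1>\pi/2$ from the direction of $x_2$; at that position the chain contains a ball whose center $y$ satisfies $|y|\ge L$ and $\langle y,x_2\rangle\le 0$, whence
\[|y-x_2|^2=|x_2|^2+|y|^2-2\langle y,x_2\rangle\ge|x_2|^2.\]
In particular the chain provides a connection in $\mathcal O_{3r}$ from $\Lambda(x_2,3r)$ to $\Lambda(x_2,|x_2|/(2\sqrt 2))^c$.

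Combining this observation with the Campbell--Mecke formula and Slivnyak's theorem for the Poisson point process,
\[\mathbb P_\lambda[G_L(r,3r)]\le \lambda\int_r^{3r}\mu(dz)\int_{|x|>L+z}\mathbb P_\lambda\!\left[\Lambda(x,3r)\stackrel{\mathcal O_{3r}}{\longleftrightarrow}\Lambda(x,|x|/(2\sqrt 2))^c\right]dx.\]
For $r$ large enough that Lemma~\ref{lem:1} applies, the integrand is bounded by $\tfrac1c e^{-c'|x|/r}$; a polar-coordinate calculation gives $\int_{\R^2}e^{-c'|x|/r}\,dx\le Cr^2$, and hence
\[\mathbb P_\lambda[G_L(r,3r)]\le C'\lambda r^2\mu([r,\infty))\le\frac{\lambda}{c''}p(r),\]
absorbing the finite range of small $r$ (where $p(r)$ is bounded below) into the constant. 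The main obstacle is the geometric claim that the chain of small balls reaches Euclidean distance at least $|x_2|$ from $x_2$; once this is in hand, the Poisson-input half of the argument is standard.
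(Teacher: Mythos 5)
Your argument is correct in substance, but it takes a genuinely different route from the paper. The paper never tags $B_2$ by Palm calculus: it introduces the auxiliary event $F(r,ar)$ that two discs of radius at least $r$ lie within distance $ar$ of the origin, bounds $\mathbb P_\lambda[F(r,ar)]\le\lambda^2a^4p(r)^2$ by a Poisson second-moment estimate (Lemma~\ref{lem:3}), observes that on $G_L(r,3r)\setminus F(r,ar)$ at least one of $B_1,B_2$ is at distance at least $ar$ from the origin, so that $E_{3r}(ar)$ occurs, and finally optimizes $a=(\lambda p(r))^{-1/4}$ to balance $\lambda^2 a^4p(r)^2$ against $\tfrac1c e^{-ca/3}\le \tfrac{1}{c'a^4}$. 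Your Campbell--Mecke computation replaces this decomposition-plus-optimization by a direct integration of the exponential decay of Lemma~\ref{lem:1} over the possible locations of $B_2$; it bypasses Lemma~\ref{lem:3} entirely and in fact yields the marginally sharper bound $C\lambda r^2\mu([r,\infty))$, i.e.\ only the first term of $p(r)$. The geometric core is the same in both proofs (the necklace minus its largest pearl is a long connected chain of discs of radius at most $3r$), only anchored at $x_2$ instead of at the origin, and your angular argument for it is sound: since $B_1$ is disjoint from $B(0,L)$ the arc of directions it covers has width less than $\pi$, so its complement, which the chain must cover, has width greater than $\pi$ and therefore contains a direction at angle at least $\pi/2$ from that of $x_2$.

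Two small points should be repaired. First, argue with the point $p$ where the chain meets the corresponding ray rather than with the centre $y$ of the covering disc: that centre may be displaced from the ray by as much as $3r$, so $\langle y,x_2\rangle\le 0$ can fail when $r$ is comparable to $|x_2|$; the point $p$ itself satisfies $|p|>L$ and $\langle p,x_2\rangle\le 0$, hence $|p-x_2|\ge|x_2|$, and your safety factor $2\sqrt2$ then absorbs the $\ell_\infty$/Euclidean conversion. Second, the claim that $p(r)$ is bounded below on the finite range of small $r$ is false when $\mu$ has bounded support (there $p(r)$ vanishes beyond the support); instead, for $3r<\ell_0$ apply Lemma~\ref{lem:1} with $\ell=\ell_0$ (legitimate since $\mathcal O_{3r}\subseteq\mathcal O_{\ell_0}$), which bounds the spatial integral by a constant depending only on $\ell_0$, and conclude using $\mu([r,\infty))\le p(r)/(\pi r^2)\le p(r)/\pi$ for $r\ge1$. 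With these adjustments your proof is complete.
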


\begin{proof}
  Let $a\ge 1$ be a constant to be chosen later.
  First, by Lemma~\ref{lem:3},
  \begin{equation}\label{eq:13}
    \begin{aligned}
      \mathbb P_\lambda[G_L(r,3r)]\,&\le\, \mathbb P_\lambda[F(r, ar)]+ \mathbb
      P_\lambda[G_L(r,3r) \setminus F(r,ar)]\\
      &\le\, \lambda^2 a^4 p(r)^2 +\mathbb P_\lambda[G_L(r,3r) \setminus F(r,ar)].
    \end{aligned}
  \end{equation}
  Assume that $G_L(r,3r)$ occurs but not $F(r,ar)$. Let $(B_1,\ldots,B_k)$ be a
  necklace around $B(0,L)$ such that $r\le\mathrm{rad}(B_2)\le3 r$. Since the
  discs $B_1$ and $B_2$ have radii larger than $r$, but $F(r,ar)$ does not occur, at least one of them
  must be at distance at least $ar$ from $0$.
  By considering a ball of the necklace intersecting $B_1$, one can see
  as above that the event $E_{3r}(ar)$ must occur. Hence,
  Lemma~\ref{lem:2} gives
  \begin{equation}
     \label{eq:14}
    \mathbb P_\lambda[G_L(r,3r) \setminus F(r,ar)]\,\le\,   \mathbb
    P_\lambda[E_{3r}(ar)]
    \,\le\, \frac1c\,e^{-ca/3}
    \,\le\, \frac{1}{c'a^4}.
  \end{equation}
  For $a=(\lambda p(r))^{-1/4}$ Equations~\eqref{eq:13} and~\eqref{eq:14} together give~\eqref{eq:8}.
\end{proof}

We are now ready to complete the proof of Theorem~\ref{thm:1}.

\begin{proof}[Proof of Theorem~\ref{thm:1}]
  Recall first that $j_0 = \lfloor \log_3 (\sqrt{L}) \rfloor$.
  Combining~\eqref{eq:6},~\eqref{eq:7} and~\eqref{eq:8} we find a constant $c>0$ such that for all large $L$ we have
  \begin{equation*}%\label{eq:9}
    \mathbb  P_\lambda\big[\exists\text{ necklace around $B(0,L)$}\big]
    \,\le\, \frac1c\,e^{-c\sqrt L}+ \frac{\lambda}{c} \sum_{j\ge j_0} p(3^j).
  \end{equation*}
  Using Fubini's theorem we may interchange the order of summation, and obtain for $z_0=3^{j_0}\ge\sqrt L/3$ the following upper bound on the infinite sum:
  \begin{equation*}
    \begin{aligned}
      \sum_{j\ge j_0}p(3^j)\,&=\,\pi\sum_{j\ge j_0}3^{2j}\int_{3^j}^\infty\mu(dz)+2\pi\sum_{j\ge j_0}3^j\int_{3^j}^\infty z\,\mu(dz)\\
      &\le\,\pi\int_{z_0}^\infty 81z^2\,\mu(dz)+2\pi\int_{z_0}^\infty 9z^2\,\mu(dz)\\
      &\le\,99\pi\int_{z_0}^\infty z^2\,\mu(dz),
    \end{aligned}
  \end{equation*}
  which tends to zero as $L$ increases, due to the assumption in~\eqref{eq:1}.
 This completes the proof of Theorem~\ref{thm:1}.
\end{proof}

\bibliographystyle{plain}
\bibliography{bib}

\newcommand{\noopsort}[1]{}\def\cprime{$'$}
\begin{thebibliography}{1}

\bibitem{ahltastei}
Daniel Ahlberg, Vincent Tassion, and Augusto Teixeira.
\newblock Sharpness of the phase transition in continuum percolation in
  $\mathbb{R}^2$.
\newblock Preprint, see \emph{arXiv:\allowbreak 1605.05926}.

\bibitem{bolrio06b}
B{\'e}la Bollob{\'a}s and Oliver Riordan.
\newblock The critical probability for random {V}oronoi percolation in the
  plane is 1/2.
\newblock {\em Probab. Theory Related Fields}, 136(3):417--468, 2006.

\bibitem{gouere08}
Jean-Baptiste Gou{\'e}r{\'e}.
\newblock Subcritical regimes in the {P}oisson {B}oolean model of continuum
  percolation.
\newblock {\em Ann. Probab.}, 36(4):1209--1220, 2008.

\bibitem{hall85}
Peter Hall.
\newblock On continuum percolation.
\newblock {\em Ann. Probab.}, 13(4):1250--1266, 1985.

\bibitem{kesten80}
Harry Kesten.
\newblock The critical probability of bond percolation on the square lattice
  equals {${1\over 2}$}.
\newblock {\em Comm. Math. Phys.}, 74(1):41--59, 1980.

\bibitem{ligschsta97}
T.~M. Liggett, R.~H. Schonmann, and A.~M. Stacey.
\newblock Domination by product measures.
\newblock {\em Ann. Probab.}, 25(1):71--95, 1997.

\bibitem{penrose}
Mathew~D. Penrose.
\newblock Non-triviality of the vacancy phase transition for the {B}oolean
  model.
\newblock Preprint, see \emph{arXiv:\allowbreak 1706.02197}.

\bibitem{roy90}
Rahul Roy.
\newblock The {R}usso-{S}eymour-{W}elsh theorem and the equality of critical
  densities and the ``dual'' critical densities for continuum percolation on
  {${\bf R}\sp 2$}.
\newblock {\em Ann. Probab.}, 18(4):1563--1575, 1990.

\end{thebibliography}

\end{document}